\documentclass[11pt]{amsart}
\usepackage[utf8]{inputenc}
\usepackage[T1]{fontenc}
\usepackage{amsmath,amssymb,amsfonts,amsthm}
\usepackage{mathtools}
\usepackage{mathrsfs}
\usepackage{dsfont}
\usepackage{enumitem}
\usepackage{multicol}
\usepackage{graphicx}
\usepackage{float}
\usepackage{subcaption}
\usepackage{rotating}
\usepackage{lscape}
\usepackage[table]{xcolor}
\definecolor{lightgray}{gray}{0.95}
\usepackage{hyperref}
\usepackage{geometry}
\usepackage[russian,english]{babel}

\DeclareRobustCommand{\ru}[1]{\foreignlanguage{russian}{#1}}

\usepackage{algorithm}
\usepackage{algpseudocode}
\usepackage[most]{tcolorbox}

\definecolor{algogray}{gray}{0.96}

\tcbset{
  algobox/.style={
    colback=algogray,
    colframe=gray!50,
    boxrule=0.7pt,
    arc=2pt,
    left=7pt,
    right=7pt,
    top=7pt,
    bottom=7pt,
    enhanced
  }
}

\theoremstyle{plain}
\newtheorem{theorem}{Theorem}[section]

\newtheorem{proposition}[theorem]{Proposition}

\newtheorem{definition}[theorem]{Definition}

\newcommand{\dps}{\displaystyle}

\setlist{itemsep=2pt, topsep=4pt, parsep=2pt}

\title[The Manufacture of Examples]{The Manufacture of Examples: the \((-1)\)-classical orthogonal polynomials}

\author{K. Castillo}
\address{CMUC, Department of Mathematics, University of Coimbra, 3000-143 Coimbra,
Portugal}
\email{kenier@mat.uc.pt}

\author{G. Gordillo-N\'u\~nez}
\address{CMUC, Department of Mathematics, University of Coimbra, 3000-143 Coimbra,
Portugal}
\email{up202310693@up.pt}

\subjclass[2020]{42C05, 46A13}

\keywords{\((-1)\)-classical orthogonal polynomials, alternating maps, orthogonality functionals, Christoffel transforms, Geronimus transforms, Bannai--Ito polynomials, big \((-1)\)-Jacobi polynomials, \((-1)\)-Meixner--Pollaczek polynomials}

\date{\today}

\begin{document}
\maketitle

\begin{abstract}
The \((-1)\)-Jacobi, Bannai--Ito, and \((-1)\)-Meixner--Pollaczek polynomials are studied in
[Trans. Amer. Math. Soc. 364 (2012), 5491--5507],
[Adv. Math. 229 (2012), 2123--2158], and
[Stud. Appl. Math. 153 (2024), e12728], respectively,
through polynomial eigenfunctions of first-order Dunkl operators.
The purpose of the present note is to show that these families are not isolated
phenomena, but particular instances of a single alternating mechanism which is
most naturally formulated at the level of orthogonality functionals, transpose
operators, and structural identities. This functional-analytic point of view
also leads to an explicit algorithm which, starting from an ordinary orthogonal
polynomial sequence in the quadratic variable, systematises the construction of
such \((-1)\)-classical families.
 \end{abstract}

\section{Introduction}

Despite the antiquity of the subject and its long-established place in analysis, orthogonal polynomial sequences advertised as ``new classical families'' continue to appear in the literature with rather surprising regularity. Of course, much depends on what is meant by ``classical''. If ``classical'' is understood in the sense of the \emph{NIST Handbook of Mathematical Functions} (2010), of the corresponding version of the \emph{NIST Digital Library of Mathematical Functions}, and of much of the literature on the subject, then the appearance of new families is not altogether surprising. Indeed, within that NIST-inherited view of the subject, the underlying conception of orthogonality is excessively restrictive, and therefore unable to accommodate the algebraic freedom of these allegedly classical families, even to the point of presenting as distinct families which are algebraically identical, or which are obtained from known families by affine transformations \cite{CG26a}; moreover, many families are excluded by definition before they even come into being, for structural reasons which are not properly justified, such as those associated with admissible alternating maps \cite{C26}. It is precisely these latter families that will occupy us in the present work. The \((-1)\)-Jacobi, Bannai--Ito, and \((-1)\)-Meixner--Pollaczek polynomials provide particularly instructive examples of this phenomenon. They are introduced through first-order Dunkl operators, or through limiting and spectral-transform arguments. Their construction, however, is in fact governed by a simple alternating mechanism directly connected with the pioneering work of Nikiforov and Uvarov \cite{NU83, NU84}, although this mechanism was not developed there; see \cite[Section~8]{C26}.

The \((-1)\)-classical families, a term that we shall use for the families mentioned above, could never belong to the NIST scheme, itself an heir to the Askey scheme. Although we shall not reopen that question here,
\cite[Theorem~5.1]{C26}, where the mutually exclusive cases \emph{(i)},
\emph{(ii)}, and \emph{(iii)} are displayed, makes the structural reason
transparent. Indeed, what is usually meant by the Askey scheme consists of
families arising from situations related to cases~\emph{(i)} and~\emph{(iii)}
of that theorem, corresponding, respectively, to admissible quadratic and
\(q\)-exponential maps. The \((-1)\)-classical families considered in the literature
belong instead to case~\emph{(ii)}, the alternating case. Their absence from
the Askey scheme is therefore not a curiosity, but a direct consequence of the
general theory. 

We shall proceed, in a self-contained manner, to explain how families of this kind may be generated within the alternating framework, starting from an orthogonal polynomial sequence. The examples mentioned
above will then appear as particular instances of a general construction. In
practical terms, rather than producing examples one by one through ad hoc
devices, we shall describe a systematic procedure for generating them, while at
the same time exposing the structural reason for their existence. 

The material that follows is drawn from \cite[Section~8]{C26} and adapted to make the present note self-contained. Let \(h\in\mathbb C^\times\), and let \(U\subseteq\mathbb C\) be a non-empty set such that
\[
U\pm \frac h2\subseteq U.
\]
Such a set will be called \emph{half-step-invariant}. A simple example is
\[
U=h\mathbb Q,
\]
which is plainly stable under translations by \(\pm h/2\). We consider the normalised alternating map
\[
X:U\to\mathbb C,
\quad
X(s)=e^{\pi i s/h}.
\]
 In this case,
\[
X\!\left(s+\frac h2\right)=iX(s),
\quad
X\!\left(s-\frac h2\right)=-\,iX(s),
\]
so the two half-step neighbours of \(X(s)\) are obtained simply by multiplication by \(\pm i\). Note, for instance, that
\(
X(h\mathbb Q)
\)
is infinite. Whenever \(U\) is half-step-invariant and \(X(U)\) is infinite, we shall say that the normalised alternating map \(X\) is \emph{admissible} on \(U\). The formulas that follow are first obtained pointwise on \(U\), but admissibility ensures that they can be read as identities in the polynomial variable \(X\). Indeed, since \(X(U)\) is infinite, any two polynomials that agree on \(X(U)\) must agree identically. Without this, one would have only relations valid on a finite image, and hence no intrinsic polynomial calculus.

If \(\mathcal P\) denotes the vector space of complex polynomials in one variable, then every polynomial \(p\in\mathcal P\) admits a unique decomposition of the form
\[
p(x)=a(-x^2)+x\,b(-x^2),
\quad
a,b\in\mathcal P.
\]
Substituting \(iX\) and \(-iX\) into this decomposition gives
\[
p(iX)=a(X^2)+iX\,b(X^2),
\quad
p(-iX)=a(X^2)-iX\,b(X^2),
\]
and hence
\[
\frac{p(iX)-p(-iX)}{2iX}=b(X^2),
\quad
\frac{p(iX)+p(-iX)}{2}=a(X^2).
\]
By admissibility, these are not merely pointwise identities on the image of \(U\), but genuine polynomial identities. Thus the alternating divided-difference and averaging procedures do not generate arbitrary new expressions: they simply separate the odd and even components of \(p\) relative to the quadratic substitution \(x\mapsto -x^2\). This is the structural feature from which the subsequent analysis will emerge. 

Accordingly, to the fixed admissible normalised alternating map \(X\) on \(U\) we attach linear operators
\[
D,S:\mathcal P\to\mathcal P
\]
defined by
\[
(Dp)(X)=\frac{p(iX)-p(-iX)}{2iX},
\quad
(Sp)(X)=\frac{p(iX)+p(-iX)}{2},
\quad p\in\mathcal P.
\]
By admissibility, these identities define \(Dp\) and \(Sp\) uniquely as polynomials in the variable \(X\). In particular, 
\[
(Dp)(X)=b(X^2),
\quad
(Sp)(X)=a(X^2).
\]
We regard \(\mathcal P\) as a locally convex space endowed with the strict countable inductive-limit topology arising from the filtration by the finite-dimensional spaces \(\mathcal P_n\) of polynomials of degree at most \(n\). We write \(\mathcal P'\) for its continuous dual, endowed with the weak topology \(\sigma(\mathcal P',\mathcal P)\). Since the operators \(D\) and \(S\) attached to the fixed normalised alternating map \(X\) are continuous, their transposes are well defined, and we obtain weakly continuous operators
\[
\mathbf D,\mathbf S:\mathcal P'\to\mathcal P'
\]
characterised by
\[
\langle \mathbf D\mathbf u,p\rangle=-\langle \mathbf u,Dp\rangle,
\quad
\langle \mathbf S\mathbf u,p\rangle=\langle \mathbf u,Sp\rangle,
\]
for every \(\mathbf u\in\mathcal P'\) and every \(p\in\mathcal P\), see \cite[Section~6]{C26}. We now specialise the general notion of orthogonality and regularity given in \cite[Definition~6.2]{C26}.

\begin{definition}[A Particular Regime of Orthogonality]\label{def:OPS-functional}
Let \(\mathbf u\in\mathcal P'\). A polynomial sequence
\(
(P_n)_{n\in\mathbb N}
\)
is said to be \emph{orthogonal with respect to} \(\mathbf u\) if
\[
\langle \mathbf u,P_nP_m\rangle=0
\]
for every \(m,n\in\mathbb N\) with \(m<n\), and
\[
\langle \mathbf u,P_n^2\rangle\neq0
\]
for every \(n\in\mathbb N\). We say that \(\mathbf u\) is \emph{regular} if
there exists an orthogonal polynomial sequence
\(
(P_n)_{n\in\mathbb N}
\)
with respect to \(\mathbf u\).
\end{definition}

We now specialise the notion of classicality given in \cite[Definition~6.6]{C26} to the non-degenerate normalised alternating setting.

\begin{definition}[\((-1)\)-Classicality]
\label{def:classical-normalised-alternating}
Fix \(h\in\mathbb C^\times\), let \(U\subseteq\mathbb C\) be half-step-invariant, and let
\[
X:U\to\mathbb C,
\quad
X(s)=e^{\pi i s/h},
\]
be an admissible normalised alternating map on \(U\). Let
\[
D,S:\mathcal P\to\mathcal P
\]
be the divided-difference and averaging operators associated with this fixed map \(X\), and let
\[
\mathbf D,\mathbf S:\mathcal P'\to\mathcal P'
\]
denote their transposes. A functional \(\mathbf u\in\mathcal P'\) is said to be
\emph{classical for the normalised alternating map \(X\)} if there exists a monic orthogonal polynomial sequence with respect to \(\mathbf u\), and if there exist polynomials \(\phi,\psi\in\mathcal P\), with
\[
\deg\phi\le2,
\quad
\deg\psi=1,
\]
such that
\[
\mathbf D(\phi\,\mathbf u)=\mathbf S(\psi\,\mathbf u)
\]
in \(\mathcal P'\).
\end{definition}

Let
\[
\sigma:\mathcal P\to\mathcal P,
\quad
(\sigma p)(x)=p(x^2),
\]
be the quadratic substitution operator. Since \(\sigma\) is continuous, its transpose is well defined, and we obtain a weakly continuous operator
\[
\boldsymbol{\sigma}:\mathcal P'\to\mathcal P'
\]
characterised by
\[
\langle \boldsymbol{\sigma}\mathbf u,p\rangle=\langle \mathbf u,\sigma p\rangle,
\]
for every \(\mathbf u\in\mathcal P'\) and every \(p\in\mathcal P\). If \(\mathbf u\in\mathcal P'\) is \((-1)\)-classical, \cite[Theorem~8.6]{C26} yields
\[
\mathbf S\bigl((x-\tau)\mathbf u\bigr)=0.
\]
Equivalently,
\[
\langle \mathbf u,(x-\tau)p(x^2)\rangle=0
\]
for every \(p\in\mathcal P\). Thus, in the normalised alternating case,
classicality is governed by a first-order annihilation condition in the
quadratic variable. This single condition is the whole
mechanism. In particular, it will be enough to reduce, in a direct and
transparent way, constructions which otherwise tend to appear in the literature
as long and rather specialised arguments.

\begin{proposition}\label{thm:alternating-construction-algorithm}
Let \(\mathbf v\in\mathcal P'\) admit a monic orthogonal polynomial sequence
\((R_n)_{n\in\mathbb N}\), and let \(\tau\in\mathbb C\). Assume that
\[
R_n(\tau^2)\neq0
\]
for every \(n\in\mathbb N\). Then there exists a unique functional
\(\mathbf u\in\mathcal P'\) such that
\[
\boldsymbol{\sigma}\mathbf u=\mathbf v,
\quad
\langle \mathbf u,(x-\tau)p(x^2)\rangle=0
\]
for every \(p\in\mathcal P\). Moreover, if
\[
S_n(x)=
\frac{
R_{n+1}(x)-\dfrac{R_{n+1}(\tau^2)}{R_n(\tau^2)}\,R_n(x)
}{
x-\tau^2
},
\]
and \((P_n)_{n\in\mathbb N}\) is defined by
\[
P_{2n}(x)=R_n(x^2),
\quad
P_{2n+1}(x)=(x-\tau)S_n(x^2),
\]
then \((P_n)_{n\in\mathbb N}\) is the monic orthogonal polynomial sequence
with respect to \(\mathbf u\).
\end{proposition}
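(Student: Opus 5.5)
Existence and uniqueness of \(\mathbf u\) come first, and they are settled on the moments. Every \(p\in\mathcal P\) decomposes uniquely as \(p(x)=a(x^2)+x\,b(x^2)\) with \(a,b\in\mathcal P\), which is the decomposition of the introduction with \(x\mapsto -x\) in the quadratic variable. The two conditions in the statement then force
\[
\langle\mathbf u,a(x^2)\rangle=\langle\mathbf v,a\rangle,
\quad
\langle\mathbf u,x\,b(x^2)\rangle=\tau\langle\mathbf u,b(x^2)\rangle=\tau\langle\mathbf v,b\rangle .
\]
We therefore define \(\langle\mathbf u,p\rangle:=\langle\mathbf v,a+\tau b\rangle\). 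This is linear, and it is continuous because every linear functional on \(\mathcal P\) is continuous for the inductive-limit topology. It satisfies \(\boldsymbol\sigma\mathbf u=\mathbf v\). It also satisfies \(\langle\mathbf u,(x-\tau)p(x^2)\rangle=\langle\mathbf v,\tau p-\tau p\rangle=0\), because \((x-\tau)p(x^2)=x\,p(x^2)-\tau p(x^2)\) has \(a=-\tau p\) and \(b=p\). The displayed identities show that any functional with the two properties must agree with this one, which gives uniqueness. As a compact formula, \(\langle\mathbf u,p\rangle=\langle\mathbf v,(\tilde Sp)+\tau(\tilde Dp)\rangle\), where \(\tilde S\) and \(\tilde D\) extract \(a\) and \(b\).

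Next I check that \(S_n\) is a monic polynomial of degree \(n\). The numerator \(R_{n+1}(x)-\frac{R_{n+1}(\tau^2)}{R_n(\tau^2)}R_n(x)\) vanishes at \(x=\tau^2\), and this uses \(R_n(\tau^2)\ne0\). Hence \(S_n\in\mathcal P\), and it is monic of degree \(n\). Consequently \(P_{2n}=R_n(x^2)\) has degree \(2n\), \(P_{2n+1}\) has degree \(2n+1\), and both are monic.

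For orthogonality, the key computation is that \((x-\tau)\mathbf u\) pushes forward nicely. For \(p=a(x^2)+xb(x^2)\), using \(x^2=\) the quadratic variable,
\[
\langle\mathbf u,(x-\tau)p\rangle=\langle\mathbf u,x^2b(x^2)+x\,a(x^2)-\tau a(x^2)-\tau x b(x^2)\rangle=\langle\mathbf v,(x-\tau^2)b\rangle .
\]
Here the terms \(x\,a(x^2)\) and \(-\tau a(x^2)\) combine to \(\tau a-\tau a=0\), and the other two give \(x b-\tau^2 b\) after applying the rule \(\langle\mathbf u,x\,c(x^2)\rangle=\tau\langle\mathbf v,c\rangle\).

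It suffices to show \(\langle\mathbf u,P_nx^k\rangle=0\) for \(k<n\) and \(\ne0\) for \(k=n\), split by parity.
\begin{enumerate}
\item[(a)] Even index \(n=2m\), even power \(k=2j\): \(\langle\mathbf u,R_m(x^2)x^{2j}\rangle=\langle\mathbf v,R_mx^j\rangle\), which vanishes for \(j<m\) and is nonzero for \(j=m\).
\item[(b)] Even index \(n=2m\), odd power \(k=2j+1<2m\): \(\langle\mathbf u,R_m(x^2)x^{2j+1}\rangle=\tau\langle\mathbf v,R_mx^j\rangle=0\) since \(j<m\).
\item[(c)] Odd index \(n=2m+1\), odd power: write \(x^{2j+1}=(x-\tau)x^{2j}+\tau x^{2j}\). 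Then
\[
\langle\mathbf u,(x-\tau)S_m(x^2)x^{2j+1}\rangle=\langle\mathbf u,(x-\tau)^2S_m(x^2)x^{2j}\rangle+\tau\langle\mathbf u,(x-\tau)S_m(x^2)x^{2j}\rangle ,
\]
and the second term vanishes by the annihilation condition.
\item[(d)] Odd index \(n=2m+1\), even power: \(\langle\mathbf u,(x-\tau)S_m(x^2)x^{2j}\rangle=0\) directly, by the annihilation condition.
\end{enumerate}
For the remaining term in (c), the pushforward formula applies to \(p=(x-\tau)S_m(x^2)x^{2j}\), which has \(b=S_mx^j\). It gives \(\langle\mathbf v,(x-\tau^2)S_m(x)x^j\rangle\). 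Since \((x-\tau^2)S_m=R_{m+1}-cR_m\), this equals \(\langle\mathbf v,R_{m+1}x^j\rangle-c\langle\mathbf v,R_mx^j\rangle\). It is \(0\) for \(j<m\), and equals \(-c\langle\mathbf v,R_m^2\rangle\ne0\) for \(j=m\).

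The main point to watch is exactly this nonvanishing. It needs \(c=R_{m+1}(\tau^2)/R_m(\tau^2)\ne0\), which holds by the hypothesis \(R_{m+1}(\tau^2)\ne0\). Thus every \(\langle\mathbf u,P_n^2\rangle\ne0\). Since \(\deg P_n=n\), the conditions \(\langle\mathbf u,P_nP_m\rangle=0\) for \(m<n\) follow, and so \((P_n)\) is the monic orthogonal polynomial sequence with respect to \(\mathbf u\). It is the unique monic one, by the standard uniqueness of monic orthogonal sequences for a regular functional.
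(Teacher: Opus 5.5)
Your proposal is correct and follows the paper's proof. Your definition \(\langle\mathbf u,p\rangle=\langle\mathbf v,a+\tau b\rangle\) is exactly the paper's definition via \(f=p(x^2)+(x-\tau)q(x^2)\) (with \(q=b\), \(p=a+\tau b\)), and your pushforward identity is the paper's reduction \((x-\tau)^2=(x^2-\tau^2)-2\tau(x-\tau)\) modulo the annihilated part. The only difference is that you check the Christoffel step directly from \((y-\tau^2)S_m=R_{m+1}-cR_m\) and test against monomials, instead of quoting the Christoffel formula. This makes explicit that \(\langle\mathbf u,P_{2m+1}^2\rangle=-c\langle\mathbf v,R_m^2\rangle\neq0\) uses \(R_{m+1}(\tau^2)\neq0\), a point the paper leaves implicit.
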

\begin{proof}
Every polynomial \(f\in\mathcal P\) can be written uniquely in the form
\[
f(x)=p(x^2)+(x-\tau)q(x^2),
\quad p,q\in\mathcal P .
\]
We define \(\mathbf u\in\mathcal P'\) by
\[
\langle \mathbf u,p(x^2)+(x-\tau)q(x^2)\rangle
=
\langle \mathbf v,p\rangle .
\]
This is well defined by the uniqueness of the preceding decomposition. It
immediately gives
\[
\boldsymbol{\sigma}\mathbf u=\mathbf v,
\quad
\langle \mathbf u,(x-\tau)q(x^2)\rangle=0,
\]
and uniqueness follows from the same decomposition. It remains only to identify the orthogonal polynomials. By the
Christoffel formula, the condition \(R_n(\tau^2)\neq0\) implies that
\[
S_n(y)=
\frac{
R_{n+1}(y)-\dfrac{R_{n+1}(\tau^2)}{R_n(\tau^2)}R_n(y)
}{
y-\tau^2
}
\]
is the monic orthogonal polynomial sequence with respect to
\((y-\tau^2)\mathbf v\). Now set
\[
P_{2n}(x)=R_n(x^2),
\quad
P_{2n+1}(x)=(x-\tau)S_n(x^2).
\]
These polynomials are monic and have degrees \(2n\) and \(2n+1\),
respectively. We check orthogonality. First,
\[
\langle \mathbf u,P_{2n}P_{2m+1}\rangle
=
\langle \mathbf u,
R_n(x^2)(x-\tau)S_m(x^2)
\rangle
=0
\]
by the defining annihilation condition for \(\mathbf u\). Secondly,
\[
\langle \mathbf u,P_{2n}P_{2m}\rangle
=
\langle \mathbf u,R_n(x^2)R_m(x^2)\rangle
=
\langle \mathbf v,R_nR_m\rangle,
\]
which vanishes for \(n\ne m\). Finally,
\[
\langle \mathbf u,P_{2n+1}P_{2m+1}\rangle
=
\langle \mathbf u,
(x-\tau)^2S_n(x^2)S_m(x^2)
\rangle .
\]
Since
\[
(x-\tau)^2=(x^2-\tau^2)-2\tau(x-\tau),
\]
and since every term containing the factor \((x-\tau)\) is annihilated by
\(\mathbf u\), we obtain
\[
\langle \mathbf u,P_{2n+1}P_{2m+1}\rangle
=
\langle \mathbf u,(x^2-\tau^2)S_n(x^2)S_m(x^2)\rangle.
\]
Hence
\[
\langle \mathbf u,P_{2n+1}P_{2m+1}\rangle
=
\langle \mathbf v,(y-\tau^2)S_n(y)S_m(y)\rangle,
\]
which vanishes for \(n\ne m\), since \((S_n)_{n\in\mathbb N}\) is orthogonal with respect to
\((y-\tau^2)\mathbf v\). Thus \((P_n)_{n\in\mathbb N}\) is the monic orthogonal
polynomial sequence with respect to \(\mathbf u\).
\end{proof}

Proposition~\ref{thm:alternating-construction-algorithm} clarifies the scope
of the continuous \(-1\) scheme described in \cite{PVZ24}. That scheme
organises a substantial collection of families by means of \(q\to -1\) limits,
specialisations, and explicit recurrence calculations. However, once the
alternating pullback mechanism is isolated, these families are no longer
obtained one by one through carefully chosen limits from already known
families. Rather, they arise systematically from ordinary orthogonal polynomial
systems in the quadratic variable. The role of \(q\to -1\) is therefore not accidental, but neither is it the
source of the phenomenon. It is a limiting manifestation of the alternating
case, whose intrinsic structure is already determined by the quadratic
decomposition of polynomials. 

\begin{algorithm}[h]
\caption{A Procedure for Generating \((-1)\)-Classical Orthogonal Polynomials}
\label{alg:alternating-pullback}
\begin{tcolorbox}[algobox]
\begin{algorithmic}[1]
\Require A regular functional \(\mathbf v\in\mathcal P'\), its monic orthogonal polynomial sequence
\[
(R_n)_{n\in\mathbb N},
\]
and a parameter \(\tau\in\mathbb C\) such that
\[
R_n(\tau^2)\neq0.
\]

\Ensure The monic orthogonal polynomial sequence
\[
(P_n)_{n\in\mathbb N}
\]
associated with the functional \(\mathbf u\) determined by
\[
\boldsymbol{\sigma}\mathbf u=\mathbf v,
\quad
\langle \mathbf u,(x-\tau)p(x^2)\rangle=0
\]
for every \(p\in\mathcal P\).

\State Construct the monic orthogonal polynomial sequence
\[
(S_n)_{n\in\mathbb N}
\]
by
\[
S_n(y)=
\frac{
R_{n+1}(y)-\dfrac{R_{n+1}(\tau^2)}{R_n(\tau^2)}\,R_n(y)
}{
y-\tau^2
}.
\]

\State Construct the monic orthogonal polynomial sequence
\[
(P_n)_{n\in\mathbb N}
\]
associated with \(\mathbf u\) by
\[
P_{2n}(x)=R_n(x^2),
\quad
P_{2n+1}(x)=(x-\tau)S_n(x^2).
\]

\State Return the sequence
\[
(P_n)_{n\in\mathbb N},
\]
which is the alternating family generated from
\[
(R_n)_{n\in\mathbb N}.
\]
\end{algorithmic}
\end{tcolorbox}
\end{algorithm}

In \cite[Examples 8.11 and 8.12]{C26}, we showed that the so-called complementary Bannai--Ito polynomials and the dual \((-1)\)-Hahn polynomials are nothing more than particular instances of the preceding construction. Several families encountered in the literature admit the same interpretation, although their true identity is sometimes concealed behind a second door. 

\section{Examples}
What follows consists of elementary calculations whose sole purpose is to make explicit the rather special nature of the cases treated in \cite{VZ12}, \cite{TVZ12}, and \cite{PVZ24}.

\subsection{Big \((-1)\)-Jacobi polynomials}
The construction described in \cite{VZ12} is a particular instance of Algorithm~\ref{alg:alternating-pullback}. More precisely, the family considered there arises in two successive steps. First, one applies the alternating pullback construction to a suitable regular Jacobi family with the choice \(\tau=1\). This yields the intermediate family denoted \((R_n)_{n\in\mathbb N}\) in \cite[Section~3]{VZ12}. Secondly, one applies the Geronimus transform recorded in \cite[(4.1), (4.2)]{VZ12}. The resulting family is precisely the big \((-1)\)-Jacobi family, and the explicit formulas \cite[(2.24), (2.25)]{VZ12} are then recovered by direct calculation.

Throughout this example we assume
\[
0<\lambda<1,
\]
and we choose complex parameters \(a,b\in \mathbb C\) so that the Jacobi family considered below forms a monic orthogonal polynomial sequence. In the present case, this is equivalent to requiring that the denominators
appearing in the recurrence coefficients do not vanish; see
\cite[Section~1]{CG26a}. The reader should keep in mind that no positivity
assumption is imposed here: regularity is meant in the functional-analytic sense. In the parametrisation used later, this amounts to choosing \(c,d\in\mathbb C\) so that the same non-vanishing conditions hold after the substitutions
\[
a=\frac{c-1}{2},
\quad
b=\frac{d+1}{2}.
\]
We begin with the monic shifted Jacobi polynomial sequence
\[
\bigl(\widetilde{R}_n\bigr)_{n\in\mathbb N},
\]
given by
\[
\widetilde{R}_n(y)
=
(-1)^n\frac{(a+1)_n}{(n+a+b+1)_n}
\,{}_2F_1\!\left(
\begin{matrix}
-n,\ n+a+b+1\\[4pt]
a+1
\end{matrix}
\,;\, y
\right).
\]
As in \cite[(3.13)]{VZ12}, we perform the affine change of variable
\[
y=\frac{1-t}{1-\lambda^2},
\]
and define
\[
R_n(t)=(\lambda^2-1)^n\widetilde{R}_n\!\left(\frac{1-t}{1-\lambda^2}\right).
\]
Since
\[
(\lambda^2-1)^n(-1)^n=(1-\lambda^2)^n,
\]
this becomes
\[
R_n(t)
=
(1-\lambda^2)^n\frac{(a+1)_n}{(n+a+b+1)_n}
\,{}_2F_1\!\left(
\begin{matrix}
-n,\ n+a+b+1\\[4pt]
a+1
\end{matrix}
\,;\,
\frac{1-t}{1-\lambda^2}
\right).
\]
This is exactly the monic Jacobi-type family used in \cite[(3.13)]{VZ12}. We now choose
\[
\tau=1.
\]
To apply Algorithm~\ref{alg:alternating-pullback}, we must verify that
\[
R_n(1)\neq0
\]
for every \(n\in\mathbb N\). But
\[
R_n(1)
=
(1-\lambda^2)^n\frac{(a+1)_n}{(n+a+b+1)_n},
\]
because the hypergeometric term is evaluated at \(0\), and therefore equals \(1\). Thus the assumptions of Algorithm~\ref{alg:alternating-pullback} are satisfied, so that this family provides precisely the input required for the algorithm.

Set
\[
S_n(t)=
\frac{
R_{n+1}(t)-\dfrac{R_{n+1}(1)}{R_n(1)}R_n(t)
}{
t-1
}.
\]
This is precisely the companion family introduced in \cite[(3.14)]{VZ12}. From the explicit expression for \(R_n(1)\), we obtain
\[
\frac{R_{n+1}(1)}{R_n(1)}
=
(1-\lambda^2)\,
\frac{(a+n+1)(a+b+n+1)}
     {(2n+a+b+1)(2n+a+b+2)}.
\]
This is exactly the relabelled form of the coefficient appearing in \cite[(3.16)]{VZ12}. Substituting this into the Christoffel formula and simplifying, one obtains the explicit expression
\[
S_n(t)
=
(1-\lambda^2)^n\frac{(a+2)_n}{(n+a+b+2)_n}
\,{}_2F_1\!\left(
\begin{matrix}
-n,\ n+a+b+2\\[4pt]
a+2
\end{matrix}
\,;\,
\frac{1-t}{1-\lambda^2}
\right),
\]
which coincides, after relabelling the parameters, with \cite[(3.15)]{VZ12}. We now apply Algorithm~\ref{alg:alternating-pullback}. Define
\[
P_{2m}(x)=R_m(x^2),
\quad
P_{2m+1}(x)=(x-1)S_m(x^2).
\]
We now rewrite these formulas in the parameters used in \cite[Section~4]{VZ12}, namely
\[
a=\frac{c-1}{2},
\quad
b=\frac{d+1}{2},
\quad
z=\frac{1-x^2}{1-\lambda^2}.
\]
Then
\begin{align*}
P_{2m}(x)
&=
(1-\lambda^2)^m
\frac{\left(\dps \frac{c+1}{2}\right)_m}
     {\left(\dps \frac{2m+c+d+2}{2}\right)_m}
\,{}_2F_1\!\left(
\begin{matrix}
-m,\ \dps\frac{2m+c+d+2}{2}\\[7pt]
\dps \frac{c+1}{2}
\end{matrix}
\,;\, z
\right)\\[7pt]
P_{2m+1}(x)
&=
-(1-x)
(1-\lambda^2)^m
\frac{\left(\dps \frac{c+3}{2}\right)_m}
     {\left(\dps \frac{2m+c+d+4}{2}\right)_m}
\,{}_2F_1\!\left(
\begin{matrix}
-m,\ \dps \frac{2m+c+d+4}{2}\\[7pt]
\dps \frac{c+3}{2}
\end{matrix}
\,;\, z
\right).
\end{align*}
These are exactly the corresponding formulas in \cite[(3.21), (3.22)]{VZ12}, after relabelling the parameters.

Up to this point, nothing essentially new has been used: the construction is
just a direct application of Algorithm~\ref{alg:alternating-pullback}. This
application produces the family \((P_n)_{n\in\mathbb N}\), which corresponds to the
family denoted by \((S_n)_{n\in\mathbb N}\) in \cite[Section~3]{VZ12}. It is important, however, not to identify this family with the big
\((-1)\)-Jacobi polynomials themselves. The latter are obtained from this
first output by one additional inverse step, namely a Geronimus inversion.
At the level of orthogonality functionals, this means the following. Let
\(\mathbf w\) be the functional with respect to which \((P_n)_{n\in\mathbb N}\) is
orthogonal, and let \(\mathbf u\) be the functional with respect to which
\((P_n^{(-1)})_{n\in\mathbb N}\) is orthogonal. In the notation of
\cite[Section~4]{VZ12}, the Geronimus point is \(\mu=-c\); in the present
notation this becomes \(\mu=-\lambda\). Hence
\[
(x+\lambda)\mathbf u=\mathbf w.
\]
Consequently, \(\mathbf u\) is determined only up to the addition of a multiple
of \(\boldsymbol{\delta}_{-\lambda}\), since
\[
(x+\lambda)\boldsymbol{\delta}_{-\lambda}=0.
\]
Thus the Dirac mass is not an ad hoc feature of the example, but the
unavoidable ambiguity inherent in the Geronimus inversion; see, for instance,
\cite[Proposition~1.4]{CP25}. In the present case, \cite[(4.1), (4.2)]{VZ12} writes the big
\((-1)\)-Jacobi polynomials as
\[
P_n^{(-1)}(x)=P_n(x)-g_nP_{n-1}(x),
\]
where
\[
g_n=
\begin{cases}
\dfrac{(1-\lambda)n}{2n+c+d}, & n \text{ even},\\[20pt]
-\dfrac{(1+\lambda)(n+c)}{2n+c+d}, & n \text{ odd}.
\end{cases}
\]
We now check, explicitly and separately in the even and odd cases, that this
formula gives precisely \cite[(2.24), (2.25)]{VZ12}.

\medskip
\noindent
\textbf{Even case.}
Assume \(m\ge1\), so that \(n=2m\ge2\). Then
\[
P_{2m}^{(-1)}(x)=P_{2m}(x)-g_{2m}P_{2m-1}(x),
\]
with
\[
g_{2m}=\frac{(1-\lambda)2m}{4m+c+d}.
\]
The case \(m=0\) is immediate, since \(P_0^{(-1)}(x)=1\). Hence
\begin{align*}
P_{2m}^{(-1)}(x)
&=
(1-\lambda^2)^m
\frac{\dps \left(\frac{c+1}{2}\right)_m}
     {\left(\dps \frac{2m+c+d+2}{2}\right)_m}
\,{}_2F_1\!\left(
\begin{matrix}
-m,\ \dps\frac{2m+c+d+2}{2}\\[4pt]
\dps \frac{c+1}{2}
\end{matrix}
\,;\, z
\right)
\\[7pt]
&\quad
+
\frac{(1-\lambda)2m}{4m+c+d}
(1-x)
(1-\lambda^2)^{m-1}
\frac{\left(\dps\frac{c+3}{2}\right)_{m-1}}
     {\left(\dps\frac{2m+c+d+2}{2}\right)_{m-1}}
\\[6pt]
&\quad\times
{}_2F_1\!\left(
\begin{matrix}
1-m,\ \dps\frac{2m+c+d+2}{2}\\[7pt]
\dps\frac{c+3}{2}
\end{matrix}
\,;\, z
\right).
\end{align*}
We now simplify the coefficient in the second term. First,
\[
\left(\frac{c+1}{2}\right)_m
=
\frac{c+1}{2}\left(\frac{c+3}{2}\right)_{m-1}.
\]
Secondly,
\[
\left(\frac{2m+c+d+2}{2}\right)_m
=
\left(\frac{2m+c+d+2}{2}\right)_{m-1}
\left(\frac{4m+c+d}{2}\right).
\]
Therefore
\begin{align*}
&(1-\lambda^2)^m
\frac{\left(\dps\frac{c+1}{2}\right)_m}
     {\left(\dps\frac{2m+c+d+2}{2}\right)_m}
\\[7pt]
&=
(1-\lambda^2)^m
\frac{(c+1)\left(\dps \frac{c+3}{2}\right)_{m-1}}
     {(4m+c+d)\left(\dps \frac{2m+c+d+2}{2}\right)_{m-1}}.
\end{align*}
Hence
\[
(1-\lambda^2)^{m-1}
\frac{\left(\dps \frac{c+3}{2}\right)_{m-1}}
     {\left(\dps \frac{2m+c+d+2}{2}\right)_{m-1}}
=
\frac{4m+c+d}{(1-\lambda^2)(c+1)}
(1-\lambda^2)^m
\frac{\left(\dps \frac{c+1}{2}\right)_m}
     {\left(\dps \frac{2m+c+d+2}{2}\right)_m}.
\]
Multiplying by \(g_{2m}\), we obtain
\begin{align*}
&\frac{(1-\lambda)2m}{4m+c+d}
(1-\lambda^2)^{m-1}
\frac{\left(\dps\frac{c+3}{2}\right)_{m-1}}
     {\left(\dps\frac{2m+c+d+2}{2}\right)_{m-1}}\\[7pt]
     &\quad=
\frac{2m}{(1+\lambda)(c+1)}
(1-\lambda^2)^m
\frac{\left(\dps\frac{c+1}{2}\right)_m}
     {\left(\dps\frac{2m+c+d+2}{2}\right)_m}.
\end{align*}
Thus
\begin{align*}
\kappa^{-1}_{2m}\,P_{2m}^{(-1)}(x)
&=
{}_2F_1\!\left(
\begin{matrix}
-m,\ \dps\frac{2m+c+d+2}{2}\\[4pt]
\dps\frac{c+1}{2}
\end{matrix}
\,;\, z
\right)\\[7pt]
&\quad +
\frac{2m(1-x)}{(1+\lambda)(c+1)}
{}_2F_1\!\left(
\begin{matrix}
1-m,\ \dps\frac{2m+c+d+2}{2}\\[4pt]
\dps\frac{c+3}{2}
\end{matrix}
\,;\, z
\right),
\end{align*}
where
\[
\kappa_{2m}
=
(1-\lambda^2)^m
\frac{\left(\dps\frac{c+1}{2}\right)_m}
     {\left(\dps\frac{2m+c+d+2}{2}\right)_m}.
\]
Replacing \(2m\) by \(n\), we obtain exactly \cite[(2.24), (2.26)]{VZ12}.

\medskip
\noindent
\textbf{Odd case.}
Proceeding exactly as in the preceding case, one obtains
\begin{align*}
\kappa^{-1}_{2m+1}\,P_{2m+1}^{(-1)}(x)
&=
{}_2F_1\!\left(
\begin{matrix}
-m,\ \dps\frac{2m+c+d+2}{2}\\[4pt]
\dps\frac{c+1}{2}
\end{matrix}
\,;\, z
\right)\\[7pt]
&\quad -
\frac{(2m+c+d+2)(1-x)}{(1+\lambda)(c+1)}
{}_2F_1\!\left(
\begin{matrix}
-m,\ \dps\frac{2m+c+d+4}{2}\\[4pt]
\dps \frac{c+3}{2}
\end{matrix}
\,;\, z
\right),
\end{align*}
where
\[
\kappa_{2m+1}
=
(1+\lambda)(1-\lambda^2)^m
\frac{\left(\dps\frac{c+1}{2}\right)_{m+1}}
     {\left(\dps\frac{2m+c+d+2}{2}\right)_{m+1}}.
\]
Replacing \(2m+1\) by \(n\), this is exactly \cite[(2.25), (2.26)]{VZ12}. Thus the example should not be read as an isolated ad hoc construction, but as a concrete manifestation of a more general structural mechanism.

\subsection{Bannai--Ito polynomials}

We now turn to the Bannai--Ito case. Here the first alternating step need not
be reconstructed from scratch, since that part has already been carried out in
\cite[Example~8.11]{C26}. In the notation adopted here, the output of
Algorithm~\ref{alg:alternating-pullback} is the intermediate family
\[
(P_n)_{n\in\mathbb N},
\]
which coincides with the complementary Bannai--Ito family. The point to
stress, exactly as in the big \((-1)\)-Jacobi case, is that the Bannai--Ito
polynomials do not coincide with this first output. Rather, they are obtained
from it by one further inverse step. In \cite[Section~5]{TVZ12}, the complementary Bannai--Ito polynomials are
described as Christoffel transforms of the Bannai--Ito polynomials. Thus, in
the present notation, the inverse relation takes the form
\[
B_n(x)=P_n(x)-g_nP_{n-1}(x),
\]
where
\[
g_n=
\begin{cases}
\displaystyle
-\frac{n(n-2c-2d)}{4(n-c-d+a+b)},
& n \text{ even},\\[20pt]
\displaystyle
-\frac{(n-2d+2b)(n-2c+2b)}{4(n-c-d+a+b)},
& n \text{ odd}.
\end{cases}
\]
This is exactly the Geronimus inverse of the Christoffel step described in
\cite[Section~5]{TVZ12}, rewritten in the present notation, with the
intermediate family \((P_n)_{n\in\mathbb N}\) identified with the complementary
Bannai--Ito family and the final family \((B_n)_{n\in\mathbb N}\) identified with the
Bannai--Ito family itself.

We now write explicitly the intermediate family produced by the algorithm. In
the notation adopted here, the complementary Bannai--Ito family takes the form
\[
P_{2n}(x)=R_n(x^2),
\quad
P_{2n+1}(x)=(x-b)S_n(x^2),
\]
where
\[
R_n(x^2)
=
\kappa_n^{(1)}
\,{}_4F_3\!\left(
\begin{matrix}
-n,\ n+a+b-c-d+1,\ b+x,\ b-x\\[4pt]
a+b+1,\ b-c+\dps\frac12,\ b-d+\dps\frac12
\end{matrix}
\,;\,1
\right),
\]
and
\[
S_n(x^2)
=
\kappa_n^{(2)}
\,{}_4F_3\!\left(
\begin{matrix}
-n,\ n+a+b-c-d+2,\ b+1+x,\ b+1-x\\[4pt]
a+b+2,\ b-c+\dps \frac32,\ b-d+\dps \frac32
\end{matrix}
\,;\,1
\right).
\]
The monic normalising factors are
\[
\kappa_n^{(1)}
=
\frac{
(1+a+b)_n\left(b-c+\dps\frac12\right)_n\left(b-d+\dps\frac12\right)_n
}{
(n+a+b-c-d+1)_n
},
\]
\[
\kappa_n^{(2)}
=
\frac{
(2+a+b)_n\left(b-c+\dps\frac32\right)_n\left(b-d+\dps \frac32\right)_n
}{
(n+a+b-c-d+2)_n
}.
\]
These are exactly the formulas obtained in \cite[(5.18), (5.19), (5.20)]{TVZ12},
rewritten in the present notation and with the monic normalising factors
chosen accordingly. We now verify, in direct parallel with the preceding
example, that the Geronimus formula produces the Bannai--Ito family from this
intermediate output.

\medskip
\noindent
\textbf{Even case.}
Assume \(m\ge1\), so that \(n=2m\ge2\). Then
\[
B_{2m}(x)=P_{2m}(x)-g_{2m}P_{2m-1}(x).
\]
The case \(m=0\) is immediate, since \(B_0(x)=P_0(x)=1\). Using the formulas
above, we obtain
\[
P_{2m}(x)=R_m(x^2),
\quad
P_{2m-1}(x)=(x-b)S_{m-1}(x^2),
\]
and therefore
\[
B_{2m}(x)=R_m(x^2)-g_{2m}(x-b)S_{m-1}(x^2).
\]
Since
\[
g_{2m}
=
-\frac{2m(2m-2c-2d)}{4(2m-c-d+a+b)}
=
-\frac{m(m-c-d)}{2m-c-d+a+b},
\]
it follows that
\begin{align*}
B_{2m}(x)
&=
\kappa_m^{(1)}
\,{}_4F_3\!\left(
\begin{matrix}
-m,\ m+a+b-c-d+1,\ b+x,\ b-x\\[4pt]
a+b+1,\ b-c+\dps\frac12,\ b-d+\frac12
\end{matrix}
\,;\,1
\right)
\\[7pt]
&\quad
+
\frac{m(m-c-d)}{2m-c-d+a+b}
(x-b)\kappa_{m-1}^{(2)}\\[7pt]
&\quad \quad \times {}_4F_3\!\left(
\begin{matrix}
1-m,\ m+a+b-c-d+1,\ b+1+x,\ b+1-x\\[4pt]
a+b+2,\ b-c+\frac32,\ b-d+\dps\frac32
\end{matrix}
\,;\,1
\right).
\end{align*}
Replacing \(2m\) by \(n\), this is exactly what follows from
\cite[(5.20), (5.21)]{TVZ12} after substituting the explicit expressions in
\cite[(5.18), (5.19)]{TVZ12}.

\medskip
\noindent
\textbf{Odd case.}
Proceeding exactly as in the preceding case, one obtains
\begin{align*}
B_{2m+1}(x)
&=
(x-b)\kappa_m^{(2)}
\,{}_4F_3\!\left(
\begin{matrix}
-m,\ m+a+b-c-d+2,\ b+1+x,\ b+1-x\\[4pt]
a+b+2,\ b-c+\dps\frac32,\ b-d+\dps\frac32
\end{matrix}
\,;\,1
\right)
\\[7pt]
&\quad
+
\frac{(2m+1-2d+2b)(2m+1-2c+2b)}{4(2m+1-c-d+a+b)}
\kappa_m^{(1)}\\[7pt]
&\quad \quad \times{}_4F_3\!\left(
\begin{matrix}
-m,\ m+a+b-c-d+1,\ b+x,\ b-x\\[4pt]
a+b+1,\ b-c+\dps\frac12,\ b-d+\dps\frac12
\end{matrix}
\,;\,1
\right).
\end{align*}
Replacing \(2m+1\) by \(n\), this is exactly what follows from
\cite[(5.20), (5.21)]{TVZ12} after substituting the explicit expressions in
\cite[(5.18), (5.19)]{TVZ12}. Thus the Bannai--Ito family is recovered from
the output of Algorithm~\ref{alg:alternating-pullback} by one further
Geronimus transform, exactly as in the big \((-1)\)-Jacobi case.

\subsection{The \((-1)\)-Meixner--Pollaczek polynomials}

We finish with the \((-1)\)-Meixner--Pollaczek polynomials, since in this case
the reduction is immediate. The input for Algorithm~\ref{alg:alternating-pullback}
is the Laguerre functional shifted by \(\gamma^2\). More precisely, let
\(\mathbf v\) be the regular functional whose monic orthogonal polynomial
sequence is
\[
R_n(y)
=
(-1)^n\left(\alpha+\frac12\right)_n
\,{}_1F_1\!\left(
\begin{matrix}
-n\\[2pt]
\alpha+\frac12
\end{matrix}
;\,y-\gamma^2
\right).
\]
Thus \((R_n)_{n\in\mathbb N}\) is simply the monic Laguerre family in the variable
\(y-\gamma^2\), with parameter \(\alpha-\frac12\).

We now apply Algorithm~\ref{alg:alternating-pullback} with
\[
\tau=\gamma.
\]
The required non-vanishing condition is
\[
R_n(\gamma^2)
=
(-1)^n\left(\alpha+\frac12\right)_n\neq0,
\]
under the usual regularity assumptions on the Laguerre functional. The
Christoffel transform of \(\mathbf v\) at the point \(\gamma^2\) has monic
orthogonal polynomial sequence
\[
S_n(y)
=
(-1)^n\left(\alpha+\frac32\right)_n
\,{}_1F_1\!\left(
\begin{matrix}
-n\\[2pt]
\alpha+\frac32
\end{matrix}
;\,y-\gamma^2
\right).
\]
Therefore the algorithm gives
\[
P_{2n}(x)=R_n(x^2),
\quad
P_{2n+1}(x)=(x-\gamma)S_n(x^2).
\]
That is,
\[
P_{2n}(x)
=
(-1)^n\left(\alpha+\frac12\right)_n
\,{}_1F_1\!\left(
\begin{matrix}
-n\\[2pt]
\alpha+\frac12
\end{matrix}
;\,x^2-\gamma^2
\right),
\]
and
\[
P_{2n+1}(x)
=
(-1)^n\left(\alpha+\frac32\right)_n(x-\gamma)
\,{}_1F_1\!\left(
\begin{matrix}
-n\\[2pt]
\alpha+\frac32
\end{matrix}
;\,x^2-\gamma^2
\right).
\]
These are precisely the \((-1)\)-Meixner--Pollaczek polynomials displayed in
\cite{PVZ24}. Thus there is no separate construction to be discovered here:
one feeds the shifted Laguerre functional into the alternating algorithm,
takes \(\tau=\gamma\), and the family appears at once.

\section{Conclusion}
In conclusion, the point of the preceding construction is not merely to enlarge the already extensive list of \((-1)\)-classical families. 
Rather, it is to identify the mechanism which makes such families possible.
Once the alternating structure is recognised, the apparently specialised
formulae are seen to arise from ordinary orthogonal polynomial systems in the
quadratic variable, together with a Christoffel transform and a quadratic
pullback. Thus the essential question, when one encounters a so-called
\((-1)\)-classical family, is no longer whether it lies outside the Askey
scheme, nor whether it can be obtained through a carefully arranged limiting
procedure. The decisive question is instead which sequence
\((R_n)_{n\in\mathbb N}\) generates it through the alternating construction.
After that identification, the remaining work consists of explicit, sometimes
lengthy, but fundamentally elementary calculations.

This also leads to a more critical reading of the diagram presented in
\cite[Figure~A1]{PVZ24} under the title ``Scheme of continuous
\((-1)\)-hypergeometric orthogonal polynomials''. The word ``scheme'' suggests
more than a catalogue of formulae or a network of limiting relations: it
suggests an organising principle. Yet, from the point of view developed here,
the diagram records primarily the external genealogy of the families. It tells
us how certain entries may be reached by \(q\to -1\) limits, specialisations,
or spectral transformations, but it does not identify the structural reason
why those entries exist in the first place. The distinction is important. A limiting diagram may explain how one named
family degenerates into another, but it does not by itself explain why the
same even--odd pattern, the same quadratic dependence, and the same
Christoffel--Geronimus mechanism keep reappearing. These features are not
accidental consequences of hypergeometric manipulation. They are symptoms of
the underlying alternating case. In the normalised alternating setting, every
polynomial splits according to
\[
p(x)=p_0(x^2)+(x-\tau)p_1(x^2),
\]
and this elementary decomposition already contains the mechanism behind the
families under consideration. The even subsequence comes from an ordinary
orthogonal polynomial sequence in the variable \(y=x^2\), while the odd
subsequence is obtained from its Christoffel companion at \(y=\tau^2\):
\[
P_{2n}(x)=R_n(x^2),
\quad
P_{2n+1}(x)=(x-\tau)S_n(x^2).
\]
Thus the structure precedes the hypergeometric realisation. In this sense, the continuous \((-1)\)-scheme of \cite[Figure~A1]{PVZ24} is best understood as a phenomenological chart rather than as a structural classification. It may have its uses, but it remains attached to the names of particular families, to specific limiting procedures, and to explicit hypergeometric representations. The present approach reverses the perspective, placing the conceptual mechanism before the catalogue of some visible instances.

\section*{Acknowledgements}
The authors acknowledge financial support from the Centre for Mathematics of the University of Coimbra (CMUC), funded by the Portuguese Foundation for Science and Technology (FCT), under the projects UID/00324/2025 (\url{https://doi.org/10.54499/UID/00324/2025}) and UID/PRR/00324/2025.
 The first author acknowledges financial support from the FCT under the grant \url{https://doi.org/10.54499/2022.00143.CEECIND/CP1714/CT0002}.
 The second author acknowledges financial support from FCT under the grant DOI: 10.54499/UI.BD.154694.2023.

\end{document}